\newcommand{\ind}{\mathbbm{1}} 
\newcommand{\II}{\mathbb{I}}   
\newcommand{\RR}{\mathbb{R}}    
\numberwithin{equation}{section}
\theoremstyle{plain}
\newtheorem{theorem}{Theorem}[section]
\newtheorem{proposition}[theorem]{Proposition}
\newtheorem*{example}{Example}
\theoremstyle{definition}
\newtheorem{definition}{Definition}[section]
\begin{document}

\title{Inverse $M$-matrix, a new characterization}

\author{Claude Dellacherie\thanks{Laboratoire Rapha\"el Salem,
UMR 6085, Universit\'e de Rouen, Site du Madrillet, 76801 Saint \'Etienne 
du Rouvray Cedex, France. email: Claude.Dellacherie@univ-rouen.fr} ,
Servet Mart\'inez\thanks{CMM-DIM;  Universidad de Chile; UMI-CNRS 2807; 
Casilla 170-3 Correo 3 Santiago; Chile. email: smartine@dim.uchile.cl} ,
Jaime San Mart\'in\thanks{CMM-DIM;  Universidad de Chile; UMI-CNRS 2807; 
Casilla 170-3 Correo 3 Santiago; Chile. email: jsanmart@dim.uchile.cl}}


\maketitle

\begin{abstract} 
In this article we present a new characterization of inverse $M$-matrices, inverse row diagonally
dominant $M$-matrices and inverse row and column diagonally dominant $M$-matrices, 
based on the positivity of certain inner products.
\end{abstract}

\noindent\emph{Key words:} $M$-matrix, Inverse $M$-matrix, Potentials, Complete Maximum Principle, Markov Chains.
\medskip

\noindent\emph{MSC2010:} 15B35, 15B51, 60J10. 

\setlength{\parskip}{0.1cm}

{
\bfseries
{\small
\hypersetup{linkcolor=black}
\tableofcontents
}
\normalfont
}

\section{Introduction and Main Results.}

In this short note, we give a new characterization of inverses $M$-matrices, inverses of row diagonally
dominant $M$-matrices and inverses of row and column diagonally dominant $M$-matrices. 
This is done in terms of a certain inner product to be nonnegative (see \eqref{cond:5}, \eqref{cond:6} 
and \eqref{cond:4}, respectively). These characterizations are stable under limits, that is, 
if an operator $\mathscr{U}$ can be approximated by a sequence of matrices $(U_k)_k$ 
in such a way that also the corresponding inner products converge (for example in $L^2$)  then
the limit operator will satisfy the same type of inequality. This is critical to show, for example, 
that $\mathscr{U}$ is the $0$-potential of a Markov resolvent or a Markov semigroup
because as we will see this inequality implies a strong principle called Complete Maximum Principle 
in Potential Theory (see for example \cite{MarcusRosen2006}, chapter 4). In the matrix case, this
corresponds to the inverse of a row diagonally dominant $M$-matrix (see Theorem \ref{the:3} below).

We continue with the formal definition of a potential matrix.
\begin{definition}
\label{def:1} 
A nonnegative, nonsingular matrix $U$ is called a {\it potential} 
if its inverse $M=U^{-1}$ is a row diagonally
dominant $M$-matrix, that is,
\begin{eqnarray}
&& \forall i\neq j  \hspace{0.3cm}M_{ij}\le 0;\label{cond:1}\\
&& \forall i   \hspace{0.9cm} M_{ii} > 0;\label{cond:2}\\
&& \forall i   \hspace{0.9cm}  |M_{ii}|\ge \sum\limits_{j\neq i} |M_{ij}|. \label{cond:3}
\end{eqnarray}

Also, $U$ is called a {\it double potential} if $U$ and $U^t$ are potentials.
\end{definition}

We point out that conditions \eqref{cond:1} and \eqref{cond:3} imply condition \eqref{cond:2}.
Indeed, notice that these two conditions imply that $|M_{ii}|\ge 0$, but if $M_{ii}=0$, then 
for all $j$ we would have $M_{ij}=0$, which is not possible because we assume that $M$ is nonsingular.
Finally, $M_{ii}$ cannot be negative, otherwise $1=\sum_j M_{ij} U_{ji}\le 0$, which is not possible.
We also notice that if $U$ is a symmetric potential, then clearly it is a double potential.

In what follows for a vector $x$, we denote by $x^+$ its positive part, 
which is given by $(x^+)_i=(x_i)^+$. Similarly, $x^-$ denotes the negative part of $x$. 
Also, we denote by $\langle \, ,\, \rangle$, the standard
euclidean inner product, and $\ind$ is the vector whose entries are all ones.
We are in a position to state our main result.

\begin{theorem} 
\label{the:1}
Assume that $U$ is a nonsingular nonnegative matrix of size $n$. 
\begin{enumerate}[(i)]
\item If $~U$ satisfies the following inequality: for all $x\in \RR^n$
\begin{equation}
\label{cond:4} 
\langle (Ux-\ind)^+, x\rangle \ge 0
\end{equation}
then $U$ is a potential.

\item Reciprocally, if $~U$ is a double potential then it satisfies \eqref{cond:4}. 
\end{enumerate}
In particular, if $U$ is a nonnegative nonsingular symmetric matrix, 
then $U$ is a potential iff it satisfies \eqref{cond:4}.
\end{theorem}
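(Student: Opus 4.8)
The plan is to prove the two implications separately, since the "in particular" part then follows immediately (a symmetric potential is automatically a double potential, as noted after Definition \ref{def:1}).

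For part (i), I would assume \eqref{cond:4} and deduce the three defining conditions of a potential, i.e.\ that $M = U^{-1}$ is a row diagonally dominant $M$-matrix. Since \eqref{cond:2} follows from \eqref{cond:1} and \eqref{cond:3} (as observed in the text), it suffices to prove \eqref{cond:1} and \eqref{cond:3}. The key is to feed cleverly chosen vectors $x$ into \eqref{cond:4}. The natural first move is to take $x$ of the form $x = U^{-1} y = My$ for a suitable $y$, so that $Ux = y$ and the inequality reads $\langle (y-\ind)^+, My\rangle \ge 0$. Choosing $y$ with entries that are either very large or controlled (e.g.\ $y = \ind + t e_k$ for $t$ large, or $y$ supported so that $(y-\ind)^+ = $ a single coordinate vector) should isolate individual entries or rows of $M$. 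For instance, to get $M_{ij}\le 0$ for $i\ne j$, I expect to choose $y$ so that $(y-\ind)^+$ picks out row $i$ while the sign of $(My)_{\text{relevant}}$ is governed by $M_{ij}$; letting a parameter tend to a limit should kill lower-order terms. Similarly, diagonal dominance \eqref{cond:3} should come from testing with $y$ close to $\ind$ in a way that makes $(y-\ind)^+$ detect $\sum_j M_{ij}(\text{something})$. One must also extract nonnegativity of $U$'s role versus $M$; but nonnegativity of $U$ is already assumed, so the work is entirely on the $M$ side.

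For part (ii), I would assume $U$ is a double potential — so both $M = U^{-1}$ and $M^t$ are row diagonally dominant $M$-matrices, equivalently $M$ is both row and column diagonally dominant — and verify \eqref{cond:4} for an arbitrary $x$. Fix $x$, set $y = Ux$, and let $A = \{i : y_i > 1\}$ be the "active" set, so that $(Ux - \ind)^+ = (y - \ind)\ind_A$ coordinatewise and $\langle (Ux-\ind)^+, x\rangle = \sum_{i\in A}(y_i - 1)\,x_i$ where $x = My$. The strategy is to split $x_i = (My)_i = \sum_j M_{ij} y_j$ and use the sign structure of $M$ ($M_{ii}>0$, $M_{ij}\le 0$ off-diagonal) together with row/column dominance to show the sum is nonnegative. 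I expect to compare $y_j$ against the threshold $1$ and against $y_i$, grouping terms; column diagonal dominance of $M$ (i.e.\ $M_{jj} \ge \sum_{i\ne j}|M_{ij}|$) is what should let me absorb the off-diagonal contributions from indices outside $A$, while row diagonal dominance handles those inside. A clean way to organize this is probably to write $M = D - N$ with $D$ diagonal positive and $N\ge 0$, and to use that both $D\ind \ge N\ind$ and $D\ind \ge N^t\ind$; then $\langle (y-\ind)^+, x\rangle = \langle (y-\ind)^+, Dy\rangle - \langle (y-\ind)^+, Ny\rangle$ and one estimates each piece, exploiting that on $A$ we have $y_i - 1 > 0$ and elsewhere $(y-\ind)^+ = 0$.

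The main obstacle I anticipate is part (ii): the positivity of the quadratic-type expression $\sum_{i\in A}(y_i-1)(My)_i$ is not termwise positive, and making the cancellations work will require carefully pairing each off-diagonal term $M_{ij}y_iy_j$-type contribution with a diagonal term and using the dominance inequalities in exactly the right direction — this is where having \emph{both} row and column dominance (the "double" hypothesis) is essential, and identifying the correct bookkeeping is the crux. Part (i) should be more routine: it is a matter of finding finitely many good test vectors, and the limiting arguments to discard lower-order terms are standard.
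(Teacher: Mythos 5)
Your part (i) is sound, though it takes a different route from the paper: the paper verifies the Complete Maximum Principle directly (if $(Ux)_i\le 1$ wherever $x_i\ge 0$, then \eqref{cond:4} forces $\langle (Ux-\ind)^+,x\rangle=-\langle (Ux-\ind)^+,x^-\rangle$ to vanish, so $Ux\le\ind$) and then invokes the Choquet--Deny characterization of potentials (Theorem \ref{the:3}). Your substitution $x=My$ and the test vectors $y$ with $(y-\ind)^+$ supported on a single coordinate is exactly the computation the paper carries out in the proof of Theorem \ref{the:1M}$(i)$, and it works; just make sure to close the argument by observing that a nonsingular $Z$-matrix whose inverse is the nonnegative matrix $U$ is, by definition, an $M$-matrix, so the three structural conditions you extract really do make $U$ a potential. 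This route is arguably more elementary since it avoids the Choquet--Deny theorem.

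Part (ii) has a genuine gap: you correctly identify the crux (the sum $\sum_{i\in A}(y_i-1)(My)_i$ is not termwise nonnegative) but you do not supply the idea that resolves it, and the $M=D-N$ bookkeeping you sketch is unlikely to close by direct pairing of terms. The paper's argument has two ingredients you are missing. First, shift by the equilibrium potential: since $U$ is a potential, $\mu=M\ind\ge 0$, and setting $\xi=Ux-\ind=U(x-\mu)$ gives $x=M\xi+\mu$, hence
\begin{equation*}
\langle (Ux-\ind)^+,x\rangle=\langle \xi^+,M\xi\rangle+\langle \xi^+,\mu\rangle ,
\end{equation*}
so the inhomogeneous threshold $\ind$ disappears and it suffices to prove $\langle z^+,Mz\rangle\ge 0$ for all $z$. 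Second, a spectral estimate rather than combinatorial pairing: write $M=k(\II-P)$ with $P\ge 0$ doubly substochastic (this is where the \emph{double} potential hypothesis enters), and bound
\begin{equation*}
\langle z^+,Pz\rangle\le\langle z^+,Pz^+\rangle=\bigl\langle z^+,\tfrac12(P+P^t)z^+\bigr\rangle\le\langle z^+,z^+\rangle ,
\end{equation*}
the first inequality because $P\ge 0$ and $z\le z^+$, the last because the symmetric substochastic matrix $\tfrac12(P+P^t)$ has spectral radius at most $1$. This yields $\langle z^+,Mz\rangle=k\bigl(\langle z^+,z^+\rangle-\langle z^+,Pz\rangle\bigr)\ge 0$ and finishes the proof. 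Without the shift by $\mu$ and the symmetrization/spectral-radius step, your outline does not yet constitute a proof of (ii).
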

\begin{example} Here is an example of a potential matrix $U$, for which (\ref{cond:4}) 
does not hold. Consider
$$
U=\begin{pmatrix} 2 & 100\\ 1 & 100\end{pmatrix},
$$
whose inverse is $M=U^{-1}$
$$
 M=\begin{pmatrix} 1& -1\\ -1/100 & 1/50\end{pmatrix}=\II-\begin{pmatrix} 0 & 1\\ 1/100 & 49/50\end{pmatrix}
$$
a row diagonally dominant $M$-matrix.
Nevertheless,  
$$
\langle (Uv-\ind)^+, v \rangle=(2x+100y-1)^+x+(x+100y-1)^+y=-5.3,
$$ 
for $x=-0.5, y=0.2$. Notice that $U^t$ is not a potential because its inverse, although it is an $M$-matrix, it fails
to be row diagonally dominant.
\end{example}

To generalize Theorem \ref{the:1} to include all inverse $M$-matrices we consider the following
two diagonal matrices $D,E$. Here, we assume that $U$ is a nonnegative nonsingular 
matrix, or more general, it is enough to assume that $U$ is nonnegative and it 
has at least one positive element per row and column. Let us define
$D$ as the diagonal matrix given by, for all $i$
$$
D_{ii}=\left(\sum_j U_{ij}\right)^{-1},
$$
as the reciprocal of the $i$-th row sum. Similarly, consider $E$ the diagonal matrix given by, for all
$i$
$$
E_{ii}=\left(\sum_j U_{ji}\right)^{-1},
$$
the reciprocal of the $i$-th column sum. We point out that matrices $D,E$ are computed directly from
$U$.
\begin{theorem} 
\label{the:2}
Assume that $U$ is a nonsingular nonnegative matrix of size $n$. 
\begin{enumerate}[(i)]
\item $U$ is an inverse $M$-matrix iff $D\,UE$ is a double potencial, which is further equivalent
to the following inequality: for all $x\in \RR^n$
\begin{equation}
\label{cond:5} 
\langle (Ux-D^{-1}\ind)^+,D E^{-1} x\rangle \ge 0.
\end{equation}
\item $U$ is a potential iff $~UE$ is a double potential, which is equivalent to the inequality:
for all $x\in \RR^n$
\begin{equation}
\label{cond:6} 
\langle (Ux-\ind)^+,E^{-1} x\rangle \ge 0.
\end{equation}
\end{enumerate}
\end{theorem}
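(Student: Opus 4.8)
\emph{Proof plan.} Write $M=U^{-1}$. The idea is to deduce everything from Theorem~\ref{the:1} by conjugating $U$ with the positive diagonal matrices $D,E$, exploiting crucially that these are manufactured from the row and column sums of $U$. From the definitions of $D$ and $E$ one has $D^{-1}\ind=U\ind$ and $E^{-1}\ind=U^t\ind$, hence $\ind^tE^{-1}=\ind^tU$; multiplying by $M$ on the appropriate side and using $MU=UM=\II$ yields the two identities
\begin{equation*}
M\,D^{-1}\ind=\ind\qquad\text{and}\qquad\ind^tE^{-1}M=\ind^t.
\end{equation*}
These are the engine of the argument: they force $(DUE)^{-1}=E^{-1}MD^{-1}$ to have row sums $E^{-1}\ind$ and column sums $D^{-1}\ind$, both strictly positive, and $(UE)^{-1}=E^{-1}M$ to have column sums $\ind^t$.

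First I would dispose of the purely algebraic equivalences ``\eqref{cond:5} $\Longleftrightarrow$ $DUE$ satisfies \eqref{cond:4}'' and ``\eqref{cond:6} $\Longleftrightarrow$ $UE$ satisfies \eqref{cond:4}''. Both $DUE$ and $UE$ are nonnegative and nonsingular, so Theorem~\ref{the:1} applies to them. For \eqref{cond:6}: in $\langle((UE)x-\ind)^+,x\rangle\ge0$ perform the invertible change of variables $x=E^{-1}z$; since $(UE)E^{-1}z=Uz$ this becomes exactly \eqref{cond:6}. For \eqref{cond:5}: in $\langle((DUE)x-\ind)^+,x\rangle\ge0$ put $x=E^{-1}z$, so $(DUE)E^{-1}z=DUz$; then write $DUz-\ind=D(Uz-D^{-1}\ind)$ and use that for a positive diagonal $D$ one has $(Da)^+=D(a^+)$ and $D^t=D$, turning the left-hand side into $\langle(Uz-D^{-1}\ind)^+,DE^{-1}z\rangle$, which is \eqref{cond:5}.

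Next I would handle the scaling statements ``$U$ is an inverse $M$-matrix $\Longleftrightarrow$ $DUE$ is a double potential'' and ``$U$ is a potential $\Longleftrightarrow$ $UE$ is a double potential''. Here one uses that $DUE$ is a double potential iff $E^{-1}MD^{-1}$ is an $M$-matrix that is both row and column diagonally dominant, i.e.\ (having nonpositive off-diagonal entries and nonnegative inverse $DUE$) has nonnegative row sums and nonnegative column sums; similarly for $UE$ and $E^{-1}M$. If $U$ is an inverse $M$-matrix, then $E^{-1}MD^{-1}$ is an $M$-matrix and by the two displayed identities its row and column sums are positive, so $DUE$ is a double potential; conversely $M=E(E^{-1}MD^{-1})D$ has nonpositive off-diagonal entries and nonnegative inverse $U$, hence is an $M$-matrix. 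The argument for part (ii) is the same with $D$ deleted: if $U$ is a potential then $M\ind\ge0$, so $E^{-1}M$ has nonnegative row sums while the second identity gives column sums $\ind^t>0$; conversely $E^{-1}M$ row diagonally dominant forces $M=E(E^{-1}M)$ to be a row diagonally dominant $M$-matrix, since its row sums are $E$ times those of $E^{-1}M$. In particular this shows ``$UE$ is a potential'' already implies ``$U$ is a potential'', which is why no asymmetry survives.

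Finally I would assemble the chains. By Theorem~\ref{the:1}(ii) a double potential satisfies \eqref{cond:4}, so $DUE$ double potential $\Rightarrow$ \eqref{cond:5} and $UE$ double potential $\Rightarrow$ \eqref{cond:6}. Conversely, Theorem~\ref{the:1}(i) only gives that \eqref{cond:5} makes $DUE$ a \emph{potential} — this is the one delicate point, since, as the Example shows, Theorem~\ref{the:1} genuinely does not upgrade ``potential'' to ``double potential''. But a potential has an $M$-matrix inverse, so $DUE$ potential $\Rightarrow$ $U$ is an inverse $M$-matrix $\Rightarrow$ $DUE$ is a double potential (second step, both directions), and likewise \eqref{cond:6} $\Rightarrow$ $UE$ potential $\Rightarrow$ $U$ potential $\Rightarrow$ $UE$ double potential. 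Threading these implications closes both equivalences. The main obstacle is exactly this last point: one cannot conclude directly from Theorem~\ref{the:1} and must re-enter through the $M$-matrix structure of the scaled inverse — which works only because $D$ and $E$ are the reciprocals of the row and column sums of $U$, precisely what makes the displayed identities, and hence the double diagonal dominance, hold.
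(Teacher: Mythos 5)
Your proposal is correct and follows essentially the same route as the paper: the identities $DU\ind=\ind$ and $\ind^tUE=\ind^t$ give the double diagonal dominance of $(DUE)^{-1}$ and $(UE)^{-1}$, the change of variables $x=E^{-1}z$ together with $(Da)^+=Da^+$ converts \eqref{cond:4} for the scaled matrix into \eqref{cond:5} (resp.\ \eqref{cond:6}), and the converse direction is closed by routing through ``inverse $M$-matrix'' (resp.\ ``potential'') exactly as the paper does. Your explicit flagging of the delicate point---that Theorem~\ref{the:1}(i) only yields ``potential'' and one must re-enter through the scaling equivalence to upgrade to ``double potential''---is a faithful, slightly more detailed account of the paper's own argument.
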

\begin{proof} $(i)$ Assume that $M=U^{-1}$ is an $M$-matrix. Then, $W=D\,UE$ is a double potential.
Indeed, it is clear that $N=W^{-1}$ is an $M$-matrix.
Now, consider $\mu=E^{-1}\ind$, then
$$
W\mu=DU\ind=\ind,
$$
by the definition of $D$. This means that $N\ind=W^{-1}\ind=\mu\ge 0$, and so
$N$ is a row diagonally dominant matrix. Similarly, if we take $\nu=D^{-1}\ind$, we have
$$
\nu^t W=\ind^t UE=\ind^t.
$$
This proves that $\ind^t N=\nu^t\ge 0$ and therefore, we conclude that 
$N$ is a column diagonally dominant matrix. In summary,
$W$ is a double potential. Conversely, if $W$ is a double potential, in particular it is an inverse $M$-matrix,
which implies that $U$ is an inverse $M$-matrix. 

Let us prove that $U$ being an inverse $M$-matrix is equivalent to \eqref{cond:5}. 
We first assume $W$ is a double potential, then from Theorem \ref{the:1}, 
we have for all $x\in \RR^n$
$$
0\le \langle (D\,UE\,x-\ind)^+,x\rangle=\langle (DUy-\ind)^+,E^{-1}y\rangle
=\langle (Uy-D^{-1}\ind)^+,DE^{-1}y\rangle,
$$
which is condition \eqref{cond:5}. Here, we have used the straightforward to prove property that
for a diagonal matrix, with positive diagonal elements, it holds $(Dz)^+=D z^+$.

Conversely, assume that $U$ satisfies \eqref{cond:5} then, we obtain that
$W$ satisfies \eqref{cond:4} in Theorem \eqref{the:1} and therefore it is an inverse $M$-matrix. 
So, $U$ is an inverse $M$-matrix, proving the desired equivalence.

\noindent$(ii)$ This time we take $W=UE$. Since $U$ is a potential, there exists
a nonnegative vector $\mu$, such that $U\mu=\ind$, then $UEE^{-1}\mu=\ind$ and $W$
is a potential. On the other hand, $\ind^t UE=\ind^t$, and therefore $W$ is a double
potential. The rest follows similarly as in the proof of $(i)$.
\end{proof}

The next theorem is a complement to Theorem 1.1. One way to approach this result is 
by making the change of variables $y=Ux$ in \eqref{cond:4}. 

\begin{theorem}
\label{the:1M}
Assume $M$ is a matrix of size $n$. Then

\begin{enumerate}[(i)]

\item If $M$ satisfies the inequality, for all $x\in \RR^n$
\begin{equation}
\label{cond:M} 
\langle (x-\ind)^+, Mx\rangle \ge 0,
\end{equation}
then $M$ satisfies the the following structural properties
\begin{eqnarray}
&& \forall i\neq j  \hspace{0.3cm}M_{ij}\le 0;\label{cond:1'}\\
&& \forall i   \hspace{0.9cm} M_{ii} \ge 0;\label{cond:2'}\\
&& \forall i   \hspace{0.9cm}  M_{ii}\ge \sum\limits_{j\neq i} -M_{ij} \label{cond:3'}.
\end{eqnarray}
That is, $M$ is a $Z$-matrix, with nonnegative diagonal elements and it is a row diagonally dominant matrix.

\item If $M$ is a $Z$-matrix, with nonnegative diagonal elements and it is a row and column diagonally dominant matrix, 
then it satisfies \eqref{cond:M}.
\end{enumerate}

\end{theorem}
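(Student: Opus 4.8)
The plan is to prove the two implications by carefully testing the inequality \eqref{cond:M} on suitably chosen vectors, and for part (ii) by reducing to Theorem \ref{the:1} via a limiting/perturbation argument combined with the change of variables $y = Ux$.

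For part (i), I would extract each structural property \eqref{cond:1'}--\eqref{cond:3'} from \eqref{cond:M} by plugging in a well-chosen family of test vectors. To get $M_{ij}\le 0$ for $i\neq j$, fix such a pair and try a vector of the form $x = \ind + t\, e_i$ with $t>0$ small, or more carefully $x$ with $x_i$ large and $x_j$ a free parameter: then $(x-\ind)^+$ is supported on coordinate $i$ (for suitable choices), and $\langle (x-\ind)^+, Mx\rangle = (x_i-1)(Mx)_i = (x_i-1)\sum_k M_{ik}x_k$; letting $x_j$ vary while keeping $x_i>1$ fixed forces $M_{ij}\le 0$, since otherwise one can drive the expression negative by taking $x_j$ very negative (and one checks $(x-\ind)^+$ stays supported away from $j$). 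The diagonal sign condition \eqref{cond:2'} and the row-dominance \eqref{cond:3'} should come out of the same circle of ideas: test with $x = c\,\ind$ for $c$ slightly larger than $1$ to see $\langle (c-1)\ind, Mx\rangle = c(c-1)\sum_j (M\ind)_j \ge 0$, hence row sums are $\ge 0$, which is exactly \eqref{cond:3'} given \eqref{cond:1'}; and then \eqref{cond:2'} follows from \eqref{cond:1'} and \eqref{cond:3'}.

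For part (ii), the cleanest route is to observe that the hypothesis on $M$ — $Z$-matrix, nonnegative diagonal, row and column diagonally dominant — is exactly the statement that $M$ is a (possibly singular) row-and-column diagonally dominant $Z$-matrix. If $M$ is additionally nonsingular it is a double potential's inverse, so $U = M^{-1}$ is a double potential, and Theorem \ref{the:1}(ii) gives $\langle (Ux-\ind)^+, x\rangle \ge 0$ for all $x$; substituting $x \mapsto Mx$ (equivalently setting $y = Ux$, i.e. $x = My$) turns this into $\langle (x - \ind)^+, My\rangle \ge 0$ with $y = Mx$... one must be careful about the direction of the substitution. Concretely: for nonsingular $M$, write any $z\in\RR^n$ as $z = Ux$ with $x = Mz$; then $0 \le \langle (Ux-\ind)^+, x\rangle = \langle (z-\ind)^+, Mz\rangle$, which is precisely \eqref{cond:M}. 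For the general (possibly singular) case, approximate $M$ by $M_\e = M + \e I$, which is still a $Z$-matrix with nonnegative (indeed positive) diagonal and remains row and column diagonally dominant, and is now nonsingular; apply the nonsingular case to get $\langle (z-\ind)^+, M_\e z\rangle \ge 0$ for all $z$, and let $\e \downarrow 0$, using that $\langle (z-\ind)^+, M_\e z\rangle \to \langle (z-\ind)^+, Mz\rangle$ for each fixed $z$.

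The main obstacle I anticipate is the bookkeeping in part (i): one needs test vectors for which the positive-part support is controlled well enough that $\langle (x-\ind)^+, Mx\rangle$ isolates a single entry or row sum of $M$, and verifying that the chosen $x$ genuinely keeps the unwanted coordinates out of the support (and handling the boundary cases where $x_k = 1$) requires some care. A secondary point to get right in part (ii) is confirming that $M + \e I$ still satisfies all three hypotheses — the off-diagonal entries are untouched so \eqref{cond:1'} persists, the diagonal only increases so \eqref{cond:2'} and the dominance inequalities are preserved — and that nonsingularity of $M+\e I$ holds for all small $\e>0$ (it does, since a diagonally dominant $Z$-matrix with strictly positive diagonal is an $M$-matrix, hence invertible). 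Once those two checks are in place, the argument closes.
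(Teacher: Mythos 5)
Your part (ii) and your treatment of the off-diagonal sign condition \eqref{cond:1'} are essentially the paper's own arguments: for (ii) the paper perturbs to $M(\theta)=\theta\II+M$, notes it is a nonsingular, strictly row and column diagonally dominant $Z$-matrix (hence an $M$-matrix whose inverse is a double potential), applies Theorem \ref{the:1}(ii), substitutes $x=M(\theta)y$, and lets $\theta\downarrow 0$; for \eqref{cond:1'} it supports the positive part on coordinate $i$ (taking $x_k=0$ for $k\ne i,j$) and sends $x_j\to-\infty$.

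There is, however, a genuine gap in your derivation of the row dominance \eqref{cond:3'}. Testing \eqref{cond:M} with $x=c\,\ind$, $c>1$, gives $\langle (c-1)\ind,\,cM\ind\rangle=c(c-1)\sum_i (M\ind)_i\ge 0$, i.e.\ only that the \emph{sum over $i$} of the row sums is nonnegative; this does not imply $(M\ind)_i\ge 0$ for each individual $i$, which is what \eqref{cond:3'} asserts (a matrix can have one negative row sum compensated by the others). Since you then propose to deduce \eqref{cond:2'} from \eqref{cond:1'} and \eqref{cond:3'}, that step is left hanging as well. The fix is the test vector the paper uses: take $x_i>1$ and $x_j=1$ for all $j\ne i$, so that $(x-\ind)^+$ is supported on coordinate $i$ alone and \eqref{cond:M} reads $(x_i-1)\bigl(x_iM_{ii}+\sum_{j\ne i}M_{ij}\bigr)\ge 0$; letting $x_i\downarrow 1$ gives the $i$-th row sum $\ge 0$. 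Amusingly, your other candidate $x=\ind+t e_i$ with $t>0$ (where $e_i$ is the $i$-th canonical basis vector), which you floated for the off-diagonal entries --- where it cannot work, since one needs $x_j$ free to go to $-\infty$ --- is exactly the vector that proves \eqref{cond:3'}: it yields $t\bigl((M\ind)_i+tM_{ii}\bigr)\ge 0$ and hence $(M\ind)_i\ge 0$ as $t\downarrow 0$. The paper also verifies $M_{ii}\ge 0$ directly with $x$ supported on coordinate $i$ only, but your route to \eqref{cond:2'} via \eqref{cond:1'} and \eqref{cond:3'} is sound once \eqref{cond:3'} is actually established.
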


There is a vast literature on $M$-matrices and inverse $M$-matrices, the interested reader
may consult the books by Horn and Johnson \cite{Horn1985} and \cite{Horn1991}, among others.
In particular for the inverse $M$-problem we refer to the pioneer work of \cite{Fiedler}, \cite{Fiedler1983}
and \cite{Willoughby}.
Some results in the topic can be seen in 
\cite{Brandts2016}, \cite{Carmona}, \cite{dell2009}, \cite{dell2011}, \cite{Fiedler1998}, \cite{Fiedler2000},
\cite{Hogben}, \cite{Johnson}, \cite{Johnson2005}, \cite{Koltracht}, 
and \cite{Neumann}. The special relation of this problem
to ultrametric matrices in  \cite{dell1996}, \cite{Martinez1994}, \cite{McDonald1995}, \cite{McDonald1998}, 
\cite{Nabben2001}, \cite{Nabben1994}, \cite{Nabben1995}, \cite{Nabben1998}. 
Finally, for the relation between $M$-matrices and inverse $M$-matrices with Potential Theory see
our book \cite{libroDMSM2014}.

\section{proof of Theorem \ref{the:1} and Theorem \ref{the:1M}.}

The proof of Theorem \ref{the:1} is based on what is called the 
{\it Complete Maximum Principle} (CMP), which
we recall for the sake of completeness.

\begin{definition} 
A nonnegative matrix $U$ of size $n$, is said to satisfies the CMP if for all $x\in \RR^n$
it holds: 
$$
\sup\limits_{i} (Ux)_i \le \sup\limits_{i: x_i\ge 0} (Ux)_i,
$$
where by convention $\sup\limits_{\varnothing}=0$.
\end{definition}
The CMP says that if $x$ has at least one nonnegative coordinate then the maximum value
among the coordinates of $y=Ux$ is attained at some coordinate $i$ such that $x_i$ is nonnegative. 
An alternative equivalent definition, which is the standard in Potential Theory reads as follows, 
$U$ is a potential if for all $x$ it holds: whenever  $(Ux)_i\le 1$ on the coordinates where $x_i\ge 0$, 
then $Ux\le \ind$. The importance of this principle is given by the next result.

\begin{theorem} 
\label{the:3}
Assume $U$ is a nonnegative matrix. 
\begin{enumerate}[(i)]
\item If $~U$ is nonsingular, then $U$ satisfies the CMP iff $~U$ is a potential, that is,
$M=U^{-1}$ is a row diagonally dominant $M$-matrix.
\smallskip
\item $U$ satisfies the CMP then for all $a\ge 0$ the matrix $U(a)=a\II +U$ satisfies
the CMP and for all $a>0$ the matrix $U(a)$ is nonsingular.
\end{enumerate}
\end{theorem}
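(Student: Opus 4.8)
The plan is to treat the two parts somewhat independently, with part $(i)$ resting on the equivalence between the CMP and the structural conditions \eqref{cond:1}--\eqref{cond:3} on $M=U^{-1}$, and part $(ii)$ being a direct manipulation of the definition of the CMP together with a perturbation argument for nonsingularity.

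For part $(i)$, the strategy is a double implication. First, suppose $U$ is nonsingular and satisfies the CMP. To show $M=U^{-1}$ is a row diagonally dominant $M$-matrix I would test the CMP on cleverly chosen vectors. Taking $x$ to be (a multiple of) a column $M_{\cdot j}$ of $M$, so that $Ux=e_j$, the CMP inequality $\sup_i (Ux)_i \le \sup_{i:\,x_i\ge 0}(Ux)_i$ forces the maximum of the unit vector $e_j$, namely $1$, to be attained at a coordinate where $x_i \ge 0$; since $(e_j)_i = 1$ only at $i=j$, this yields $M_{jj}=x_j\ge 0$, and by the nonsingularity remark already made in the text, $M_{jj}>0$ — this is \eqref{cond:2}. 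For the off-diagonal sign condition \eqref{cond:1}, I would instead feed the CMP the vector $x = \ind - e_j$ suitably perturbed, or more cleanly use the observation that $U\ge 0$ together with $UM=\II$ to run a sign analysis; alternatively, test with $x$ such that $Ux$ has a strict maximum off a prescribed index. For the row-dominance inequality \eqref{cond:3}, the natural test vector is $x$ with $Ux = \ind$ (i.e. $x = M\ind$): the CMP then says the maximum value $1$ of $\ind$ is attained where $x_i\ge 0$, which is automatic, but applying the CMP to $\pm x$ and to truncations of $x$ gives $M\ind \ge 0$, i.e. $\sum_j M_{ij}\ge 0$ for every $i$, which together with \eqref{cond:1} is exactly \eqref{cond:3}. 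Conversely, if $M$ is a row diagonally dominant $M$-matrix, then $M\ind\ge 0$ and $U = M^{-1}\ge 0$; to verify the CMP, suppose $(Ux)_i\le 1$ whenever $x_i\ge 0$ and I must show $Ux\le \ind$. Write $y=Ux$, so $x=My$, and suppose for contradiction that $y$ attains its maximum $m>1$ at some index $k$ with necessarily $x_k<0$. Then $x_k = (My)_k = M_{kk}y_k + \sum_{j\ne k}M_{kj}y_j$; using $M_{kj}\le 0$ for $j\ne k$, $y_j\le m = y_k$, and $M_{kk}\ge \sum_{j\ne k}|M_{kj}|$, one gets $x_k \ge y_k\big(M_{kk} - \sum_{j\ne k}|M_{kj}|\big)\ge 0$, contradicting $x_k<0$. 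This is the standard maximum-principle computation and I expect it to go through cleanly.

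For part $(ii)$, assume $U$ satisfies the CMP and fix $a\ge 0$; I must show $U(a)=a\II + U$ satisfies the CMP. Given $x$ with at least one nonnegative coordinate, let $i^\star$ be an index where $(U(a)x)_i = (ax_i + (Ux)_i)$ is maximal. The key point is to show $x_{i^\star}\ge 0$. I would argue as follows: let $j$ be an index where $(Ux)_j$ is maximal among coordinates with $x_j\ge 0$; by the CMP for $U$ this is the global max of $Ux$. If $x_{i^\star} < 0$, then since $(Ux)_{i^\star}\le (Ux)_j$ and $ax_{i^\star} < 0 \le a x_j^+ \le$ (not quite — need care here), compare $(U(a)x)_{i^\star} = a x_{i^\star} + (Ux)_{i^\star} \le 0 + (Ux)_j \le a x_j + (Ux)_j = (U(a)x)_j$ provided $x_j\ge 0$, which holds by choice of $j$; hence the maximum of $U(a)x$ is also attained at $j$, a coordinate with $x_j\ge 0$, so the CMP for $U(a)$ holds. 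Finally, for $a>0$ and nonsingularity of $U(a)$: suppose $U(a)x = 0$. Then $ax = -Ux$, so $x = -\tfrac1a Ux$. Apply the CMP statement "$(U x)_i\le 1$ on $\{x_i\ge 0\}$ implies $Ux\le \ind$" after rescaling: from $ax=-Ux$ we get that on coordinates where $x_i\ge 0$ we have $(Ux)_i = -ax_i\le 0$, so by (a scaled form of) the CMP, $Ux\le 0$ everywhere, i.e. $ax = -Ux\ge 0$, so $x\ge 0$; symmetrically, applying the same to $-x$ gives $x\le 0$, hence $x=0$.

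The main obstacle I anticipate is the off-diagonal sign condition \eqref{cond:1} in part $(i)$ and, in part $(ii)$, getting the inequality chain for $x_{i^\star}\ge 0$ airtight when $a=0$ degenerates the argument and when ties occur in the maximizing indices; these tie-breaking and sign-bookkeeping issues are exactly where a careless argument would slip, so I would be careful to phrase the CMP in its "whenever $(Ux)_i\le 1$ on $\{x_i\ge0\}$ then $Ux\le\ind$" form and exploit its scale-invariance (it holds for $Ux\le c$ in place of $Ux\le 1$ for any $c$, by homogeneity when $c>0$ and by a limiting argument when $c=0$).
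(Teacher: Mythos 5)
The paper itself offers no proof of this theorem: part $(i)$ is attributed to Choquet and Deny \cite{choquet1956} and to Theorem 2.9 of \cite{libroDMSM2014}, so your attempt can only be measured against the standard arguments. The parts you carry out are correct. The easy half of $(i)$ (a row diagonally dominant $M$-matrix has an inverse satisfying the CMP) is the classical maximum-principle computation at an index $k$ where $y=Ux$ attains a maximum $m>1$, forcing $x_k\ge m\bigl(M_{kk}-\sum_{j\ne k}|M_{kj}|\bigr)\ge 0$ against $x_k<0$; that is right. Part $(ii)$ is also right: for $x_i<0$ and $j$ maximizing $Ux$ over $\{j:x_j\ge 0\}$ one has $(U(a)x)_i=ax_i+(Ux)_i\le (Ux)_j\le (U(a)x)_j$, and the injectivity argument $U(a)x=0\Rightarrow (Ux)_i\le 0$ on $\{x_i\ge0\}\Rightarrow Ux\le 0\Rightarrow x\ge 0$, applied to $\pm x$, is clean (you should just record the trivial case $\{i:x_i\ge0\}=\varnothing$ separately).

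The genuine gap is the forward direction of $(i)$, which is the actual content of the Choquet--Deny theorem. You correctly get $M_{jj}\ge 0$ by testing $x=Me_j$, but for the two remaining structural conditions you only list candidate test vectors without carrying any of them out, and you flag this yourself as the main obstacle. Concretely: (a) none of your three suggestions for $M_{ij}\le 0$, $i\ne j$, works as stated --- in particular ``$U\ge 0$ together with $UM=\II$'' cannot suffice, since a nonnegative nonsingular matrix need not have a $Z$-matrix inverse, so the CMP must enter essentially; (b) for row dominance, i.e.\ $\mu=M\ind\ge 0$, applying the CMP to $x=M\ind$ only says that $\sup_i 1\le\sup_{i:\mu_i\ge0}1$, i.e.\ that $\mu$ has at least one nonnegative entry, and the natural truncations $x=-\mu^-$ or $x=\mu^+$ likewise produce only trivial inequalities (e.g.\ $(U\mu^+)_i\ge 0$). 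Both facts require a genuinely different mechanism --- for instance passing through the domination principle, or the iteration in Choquet--Deny's original proof, or Theorem 2.9 of \cite{libroDMSM2014}. As written, your proposal establishes only one implication of part $(i)$ together with part $(ii)$.
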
 
The proof of $(i)$ in this theorem goes back to Choquet and Deny \cite{choquet1956} (Theorem 6, page 89). 
For a generalization of this result and a more matrix flavor of it, see Theorem 2.9 in \cite{libroDMSM2014}.

Assume that $U$ is a nonnegative matrix and satisfies the CMP, if the diagonal of
$U$ is strictly positive, which happens when $U$ is nonsingular, then there exists an equilibrium 
potential, that is, a nonnegative vector $\mu$ solution of the problem 
$$
U\mu=\ind,
$$
see for example $(v)$ Lemma 2.7 in \cite{libroDMSM2014}.

This vector $\mu$ plays an important role and it is related to the fact that $U^{-1}$ is row
diagonally dominant, when $U$ is nonsingular. In fact, in this case $\mu=U^{-1}\ind\ge 0$.

Now, we are in a position to prove Theorem \ref{the:1}.

\begin{proof}(Theorem \ref{the:1})

\noindent$(i)$ We shall prove that $U$ satisfies the CMP. For that purpose consider
$x\in \RR^n$, which has at least one nonnegative coordinate. If $(Ux)_i\le 1$, for those 
coordinates $i$ such that $x_i\ge 0$, then from condition \eqref{cond:4} we conclude that
$$
0\le \langle (Ux-\ind)^+,x\rangle=\langle (Ux-\ind)^+,x^-\rangle,
$$
which implies that $((Ux-\ind)^+)_i=0$ if $x_i<0$, proving that $U$ satisfies the CMP.
Hence, from Theorem \ref{the:3} we have that $M=U^{-1}$ is a row diagonally dominant
$M$-matrix.

\noindent$(ii)$ Assume that $U, U^t$ are potential matrices of size $n$. Then $M=U^{-1}$
is a column and row diagonally $M$-matrix, which is equivalent to have
$M=k(\II-P)$, for some  constant $k>0$ and a double substochastic matrix $P$, that is,
$P$ is a nonnegative matrix and for all $i$ it holds $\sum_j P_{ij}\le 1, \sum_j P_{ji}\le 1$.
 
We define $\mu=M\ind\ge 0$ and  $\xi=U(x-\mu)=Ux-\ind$ to get
$$
\begin{array}{l}
\langle (Ux-\ind)^+, x \rangle=\langle (Ux-U\mu)^+, x \rangle=\langle \xi^+, M\xi+\mu \rangle
=\langle \xi^+,k \xi +\mu \rangle-k \langle \xi^+,P\xi \rangle\\
\\
=k\Big(\langle \xi^+, \xi^+ \rangle-\langle \xi^+,P\xi \rangle\Big)+\langle \xi^+,\mu \rangle.
\end{array}
$$
Since $P\ge 0$, we get 
$$
\langle \xi^+,P\xi \rangle \le \langle \xi^+,P\xi^+ \rangle= \langle \xi^+,\frac12(P+P^t)(\xi)^+ \rangle
\le \langle \xi^+, \xi^+ \rangle.
$$
The last inequality holds because the nonnegative symmetric matrix $\frac12(P+P^t)$ 
is sub-stochastic and therefore its spectral radius is smaller than 1, which
implies that for all $z\in \RR^n$ it holds $ \langle z,\frac12(P+P^t)z \rangle\le \langle z, z \rangle$. We get
the inequality
$$
\langle (Ux-\ind)^+, x \rangle\ge \langle (Ux-\ind)^+,\mu \rangle\ge 0,
$$ 
which shows the result.
\end{proof}

\begin{proof}(Theorem \ref{the:1M}) The reader may consult \cite{Horn1991}
Theorem 2.5.3, for some properties about $M$-matrix that are needed in this proof.

$(i)$. Assume that $M$ is a matrix, of size $n$, that satisfies \eqref{cond:M}. 
In order to prove that condition \eqref{cond:1'} holds fix $i\in \{1,\cdots,n\}$ and consider
a vector $x$ such that $x_i>1$ and $x_k=0, k\neq i$. Then \eqref{cond:M} implies
$$
0\le \langle (x-\ind)^+, Mx\rangle=(x_i-1) M_{ii} x_i,
$$
from where we deduce $M_{ii}\ge 0$, proving that \eqref{cond:1'} holds.

To prove \eqref{cond:2'} consider $i\neq j$ fixed and take a vector $x$ such that $x_i>1,x_j<0$ and 
$x_k=0, k\neq i,j$. Then
$$
0\le \langle (x-\ind)^+, Mx\rangle=(x_i-1) (M_{ii} x_i+M_{ij}x_j)
$$
By taking $x_j$ a large negative number, we conclude that this inequality can hold only if
$M_{ij}\le 0$, proving \eqref{cond:2'}. 

Now, we prove condition \eqref{cond:3'}. For that purpose we consider $i$ fixed and we take
$x\in \RR^n$ such that $x_i>1$ and $x_j=1$ for all $j\neq i$. Then
$$
0\le \langle (x-\ind)^+, Mx\rangle=(x_i-1)\Big(x_iM_{ii}+ \sum_{j\neq i} M_{ij}\Big).
$$
This implies that $x_iM_{ii}+ \sum\limits_{j\neq i} M_{ij}\ge 0$ holds for all $x_i\ge 1$ and therefore
$M_{ii}+ \sum\limits_{j\neq i} M_{ij}\ge 0$, proving that $M$ satisfies \eqref{cond:3'}.

Part $(ii)$ follows from Theorem 1.1 by considering a perturbation of $M$. For $\theta>0$ take 
$M(\theta)=\theta\II+M$. By hypothesis $M(\theta)$ is a strictly row and column dominant $Z$-matrix, proving
that $M(\theta)$ is an $M$-matrix. Its inverse, $U(\theta)=(M(\theta))^{-1}$, is a double potential and 
therefore it satisfies inequality \eqref{cond:4}. 
Take $y\in \RR^n$ and consider $x=x(\theta)=M(\theta)y$ to obtain
$$
0\le \langle (U(\theta)x-\ind)^+, x\rangle=
\langle (y-\ind)^+, My\rangle+\theta \langle (y-\ind)^+, y\rangle.
$$
The result follows by taking $\theta\downarrow 0$.
\end{proof}

\section{Some complements}
In Potential Theory, particularly when dealing with infinite dimensional spaces, most of the time
a potential $U$ is singular. According to Theorem \ref{the:3}, in case $U$ is nonsingular, our 
definition of a potential matrix (see Definition \ref{def:1}) and the CMP are equivalent. The latter
makes sense even for singular matrices and this should be the right definition for a matrix to be
a potential. Notice that in $(ii)$ Theorem \ref{the:3} says that a potential in this sense, is the limit
of nonsingular potencial matrices, which also holds in infinite dimensional spaces.

Theorem \ref{the:1} can be extended to include singular potential matrices as follows.

\begin{theorem} Assume $U$ is a nonnegative matrix.
\begin{enumerate}[(i)]
\item If $~U$ satisfies \eqref{cond:4} then $U$ satisfies the CMP.
\smallskip

\item Reciprocally, if $~U, U^t$ satisfy the CMP, then $U$ (and $U^t$) satisfies \eqref{cond:4}. 
\end{enumerate}
That is, for a symmetric matrix $U$, condition \eqref{cond:4} and CMP are equivalent.
\end{theorem}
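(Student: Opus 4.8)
The plan is to handle the two implications separately, each time reducing to results already in hand.

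For $(i)$ nothing genuinely new is needed: the proof of Theorem \ref{the:1}$(i)$ deduced the CMP from \eqref{cond:4} without ever invoking nonsingularity of $U$. Concretely, given $x$ with at least one nonnegative coordinate and with $(Ux)_i\le 1$ on $S=\{i:x_i\ge 0\}$, one has $((Ux-\ind)^+)_i=0$ for $i\in S$; writing $x=x^+-x^-$ and using that $x^+$ is supported in $S$, \eqref{cond:4} gives $0\le\langle (Ux-\ind)^+,x\rangle=-\langle (Ux-\ind)^+,x^-\rangle\le 0$, so $((Ux-\ind)^+)_i=0$ off $S$ as well, i.e. $Ux\le\ind$; together with the trivial case $x<0$ (where $Ux\le 0$ since $U\ge 0$) this is exactly the CMP in the ``$(Ux)_i\le1$'' formulation. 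So $(i)$ is just the observation that that argument is insensitive to singularity.

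The real point is $(ii)$. The proof of Theorem \ref{the:1}$(ii)$ used $M=U^{-1}$ explicitly (writing $M=k(\II-P)$ and setting $\mu=M\ind$), and a singular $U$ satisfying the CMP need not even admit an equilibrium potential $\mu$ with $U\mu=\ind$; a direct repetition is therefore impossible. Instead I would perturb, which is precisely the role of Theorem \ref{the:3}$(ii)$. Fix $a>0$ and put $U(a)=a\II+U$. By Theorem \ref{the:3}$(ii)$, $U(a)$ satisfies the CMP and is nonsingular; applying the same statement to $U^t$ (which satisfies the CMP by hypothesis) and noting that $a\II+U^t=U(a)^t$, the matrix $U(a)^t$ also satisfies the CMP and is nonsingular. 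Theorem \ref{the:3}$(i)$ then gives that both $U(a)$ and $U(a)^t$ are potentials, i.e. $U(a)$ is a double potential, so Theorem \ref{the:1}$(ii)$ yields $\langle (U(a)x-\ind)^+,x\rangle\ge 0$ for every $x\in\RR^n$.

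To finish I would let $a\downarrow 0$: since $U(a)x=ax+Ux\to Ux$ and $z\mapsto z^+$ is continuous, $\langle (U(a)x-\ind)^+,x\rangle\to\langle (Ux-\ind)^+,x\rangle$, so $U$ satisfies \eqref{cond:4}. The hypothesis is symmetric in $U$ and $U^t$, hence $U^t$ satisfies \eqref{cond:4} as well, and in the symmetric case combining this with $(i)$ (and Theorem \ref{the:3}$(i)$) gives the stated equivalence of \eqref{cond:4} with the CMP. I do not expect a serious obstacle: the only things needing a word of care are that the perturbation commutes with transposition — so that the CMP hypothesis on $U^t$ is genuinely used — and the elementary continuity of $a\mapsto (U(a)x-\ind)^+$ that justifies passing to the limit.
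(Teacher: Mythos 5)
Your proposal is correct and follows essentially the same route as the paper: part $(i)$ is the observation that the argument for Theorem \ref{the:1}$(i)$ never uses nonsingularity, and part $(ii)$ perturbs to $U(a)=a\II+U$, invokes Theorem \ref{the:3} to see that $U(a)$ is a nonsingular double potential, applies Theorem \ref{the:1}$(ii)$, and lets $a\downarrow 0$. You simply make explicit a few details the paper leaves implicit (the empty-support case of the CMP, the fact that the perturbation commutes with transposition, and the continuity justifying the limit).
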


\begin{proof} The proof of $(i)$ is identical to the one of Theorem \ref{the:1} $(i)$.

\noindent$(ii)$. Consider as in Theorem \ref{the:3} a perturbation of $U$, given by
$U(a)=a\II+U$, for $a>0$. Since $U(a),U^t(a)$ are nonsingular potential matrices
we can use Theorem \ref{the:1} to conclude that for all $a>0$ and $x\in \RR^n$
one has
$$
0\le \langle (U(a)x-\ind)^+, x\rangle.
$$
Now, it is enough to take the limit as $a\downarrow 0$, to conclude the result.
\end{proof}

The question now is: is there a principle like CMP, that characterizes inverses $M$-matrices?
The answer is yes, and it is given by the following principle taken from Potential Theory.

\begin{definition}
A nonnegative matrix $U$ is said to satisfy the {\it the domination principle} (DP) if for
any nonnegative vectors $x,y$ it holds $(Ux)_i\le (Uy)_i $ for those coordinates $i$
such that $x_i > 0$, 
then $Ux\le Uy$.
\end{definition}
Theorem 2.15 in \cite{libroDMSM2014} is exactly this characterization, which we copy here
for the sake of completeness.

\begin{theorem} Assume that $U$ is a nonnegative nonsingular matrix. Then,
$U^{-1}$ is an $M$-matrix iff $~U$ satisfies DP.

\end{theorem}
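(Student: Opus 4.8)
The plan is to reduce the statement to the already-proved equivalence between the CMP and the row diagonally dominant $M$-matrix property (Theorem \ref{the:3}$(i)$), together with the Choquet--Deny style approximation argument used throughout Section 2. The key observation is that the domination principle (DP) is a ``two-potential'' version of the complete maximum principle: CMP is exactly DP with the freedom of allowing one of the two competing potentials to be the constant vector $\ind$, after absorbing a scalar. So I would first show directly that $U$ satisfies DP if and only if $U$ satisfies a suitable strengthening of CMP, and then invoke Theorem \ref{the:3}. Concretely, given nonnegative $x$, since $U$ is nonsingular and nonnegative with strictly positive diagonal, there is an equilibrium potential $\mu\ge 0$ with $U\mu=\ind$ (as recalled after Theorem \ref{the:3}); one then compares $Ux$ against $t\,\ind = U(t\mu)$ to pass between the normalizations appearing in the two principles.

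The forward direction ($U^{-1}$ an $M$-matrix $\Rightarrow$ DP) is the substantive one. Here I would argue as follows. Write $M=U^{-1}=k(\II-P)$ with $k>0$ and $P$ substochastic (this is the standard structure of a row diagonally dominant $M$-matrix, used in the proof of Theorem \ref{the:1}$(ii)$). Given nonnegative vectors $x,y$ with $(Ux)_i\le(Uy)_i$ on $\{i: x_i>0\}$, set $z=U(x-y)$, so that $x-y=Mz=k(\II-P)z$ and hence $x-y\ge k\,z - k\,Pz$. On the set where $z_i>0$ one has $x_i\ge (Mz)_i + y_i > 0$ when... — more carefully, I would consider the index $i^\star$ where $z=U(x-y)$ attains its maximum and show, using substochasticity of $P$ and nonnegativity of $x,y$, that at such an index $x_{i^\star}=0$ unless the maximum is already $\le 0$; this is precisely the mechanism in the CMP proof of Theorem \ref{the:1}$(i)$, now applied to $w=x-y$ rather than to a single vector, exploiting that $w^+\le x$ coordinatewise because $y\ge 0$. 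From $\langle z^+, x-y\rangle \ge 0$ combined with $(Ux-Uy)_i\le 0$ where $x_i>0$, and $x\ge w^+$, one forces $(z^+)_i=0$ for all $i$, i.e. $Ux\le Uy$.

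For the converse (DP $\Rightarrow U^{-1}$ an $M$-matrix), I would simply note that DP implies CMP: take $y=t\mu$ with $\mu$ the equilibrium potential and $t$ large enough that $Uy=t\,\ind$ dominates $Ux$ wherever $x_i>0$ (possible since there $Ux\le \sup_i(Ux)_i$, finite), so DP gives $Ux\le t\,\ind$; optimizing over $t$ recovers the CMP inequality $\sup_i(Ux)_i\le \sup_{i:x_i\ge 0}(Ux)_i$. Then Theorem \ref{the:3}$(i)$ yields that $M=U^{-1}$ is a row diagonally dominant $M$-matrix, in particular an $M$-matrix. I expect the main obstacle to be the bookkeeping in the forward direction: making rigorous the ``maximum index'' argument for the difference $x-y$ so that the sign hypothesis on the coordinates where $x_i>0$ (not $x_i\ge 0$) is used correctly, and verifying that the strict-positivity/substochasticity interplay leaves no room for a positive coordinate of $z$. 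Everything else is the routine perturbation-and-limit machinery already deployed in Section 2, which could alternatively be cited directly from \cite{libroDMSM2014}, Theorem 2.15.
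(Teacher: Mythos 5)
Your overall strategy---reducing DP to CMP and then invoking Theorem \ref{the:3}$(i)$---cannot work, because DP and CMP are \emph{not} equivalent. CMP characterizes potentials, i.e.\ matrices whose inverses are \emph{row diagonally dominant} $M$-matrices, whereas the theorem at hand characterizes general inverse $M$-matrices; the precise relation is the one quoted in the paper (Lemma 2.13 of \cite{libroDMSM2014}): CMP $=$ DP \emph{plus} the existence of a nonnegative equilibrium potential. The paper's own Example makes this concrete: there $U^t$ is an inverse $M$-matrix, hence (by the very theorem you are proving) satisfies DP, yet $(U^t)^{-1}$ is not row diagonally dominant, so $U^t$ does not satisfy CMP. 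Your converse direction fails exactly at the point where you take $y=t\mu$ with $\mu\ge 0$ solving $U\mu=\ind$: the paper guarantees such a nonnegative $\mu$ only \emph{under the CMP hypothesis}, and in general $U^{-1}\ind$ has negative entries (for $U^t$ in the Example, $(U^t)^{-1}\ind=(0.99,-0.98)^t$), so the vector $y$ you feed into DP is not nonnegative and DP does not apply. Any correct proof of the converse must extract the sign structure of $U^{-1}$ from DP without passing through $\ind$ (e.g.\ by testing DP against the positive and negative parts of the columns of $U^{-1}$).

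The forward direction has a parallel gap: writing $M=U^{-1}=k(\II-P)$ with $P$ \emph{substochastic} is equivalent to $M\ind\ge 0$, i.e.\ to row diagonal dominance, which is not part of the hypothesis ``$M$ is an $M$-matrix.'' For a general nonsingular $M$-matrix one only has $M=s\II-B$ with $B\ge 0$ and $\rho(B)<s$, and the maximum-index mechanism you borrow from the CMP proof (which bounds $(Pz)_{i^\star}$ by the row sum of $P$ times $\max_j z_j$) has nothing to grip on. The argument that does work is a spectral one localized to the set $S=\{i: z_i>0\}$, $z=U(x-y)$: the DP hypothesis forces $x_i=0$ and hence $(Mz)_i=-y_i\le 0$ for $i\in S$, which gives $s\,z_S\le B_{SS}\,z_S$ componentwise with $z_S>0$, and iterating this contradicts $\rho(B_{SS})\le\rho(B)<s$ unless $S=\varnothing$. (Even in the substochastic case your sketch only yields $z_{i^\star}\le z_{i^\star}$, so it is incomplete without an iteration or irreducibility argument.) Note finally that the paper itself offers no proof of this statement---it is quoted verbatim from Theorem 2.15 of \cite{libroDMSM2014}---so citing that reference, as you mention at the end, is the intended route; but as a self-contained argument your proposal does not close either implication.
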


It is interesting to know a relation between CMP and DP, which is given by Lemma 2.13 in 
\cite{libroDMSM2014}.

\begin{proposition} Assume that $U$ is a nonnegative matrix with positive diagonal
elements. Then, the following are equivalent
\begin{enumerate}[(i)]
\item $U$ satisfies the CMP
\item $U$ satisfies the DP and there exists a nonnegative vector
$\mu$ solution to $U\mu=\ind$.
\end{enumerate}
\end{proposition}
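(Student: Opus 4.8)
The plan is to establish the two implications separately, using the two formulations of the CMP recalled in the excerpt (the supremum form and the equivalent statement ``if $(Ux)_i\le 1$ on $\{x_i\ge 0\}$ then $Ux\le\ind$'') together with the representation $\ind=U\mu$. This representation is exactly what lets one pass between domination by the constant vector $\ind$ (the content of the CMP) and domination by an arbitrary potential $Uy$ (the content of the DP).

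For $(ii)\Rightarrow(i)$, I would assume the DP together with a nonnegative $\mu$ satisfying $U\mu=\ind$, and verify the CMP in its second form. Given $x\in\RR^n$ with $(Ux)_i\le 1=(U\mu)_i$ on $\{i:x_i\ge 0\}$, I would split $x=x^+-x^-$ and rewrite this inequality as $(Ux^+)_i\le \bigl(U(x^-+\mu)\bigr)_i$ on $\{x_i\ge 0\}$. Since $\{i:x^+_i>0\}=\{i:x_i>0\}\subseteq\{i:x_i\ge 0\}$, the hypothesis of the DP holds for the nonnegative pair $x^+$ and $x^-+\mu$, so the DP yields $Ux^+\le U(x^-+\mu)$ on every coordinate; that is, $Ux=Ux^+-Ux^-\le U\mu=\ind$, which is precisely the CMP.

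For $(i)\Rightarrow(ii)$, the existence of a nonnegative $\mu$ with $U\mu=\ind$ is the equilibrium-potential statement already recorded in the excerpt, and this is the only place the positive-diagonal hypothesis is used. It remains to derive the DP. Here I would fix nonnegative $x,y$ with $(Ux)_i\le(Uy)_i$ on $\{i:x_i>0\}$ and, for $\epsilon>0$, test the supremum form of the CMP on the perturbed vector $v_\epsilon=x-y-\epsilon\ind$. The point of the perturbation is that $\{i:v_{\epsilon,i}\ge 0\}\subseteq\{i:x_i>0\}$, because $v_{\epsilon,i}\ge 0$ forces $x_i\ge y_i+\epsilon\ge\epsilon>0$; and on that set $(Uv_\epsilon)_i=(Ux)_i-(Uy)_i-\epsilon(U\ind)_i\le 0$, using the hypothesis on $\{x_i>0\}$ and $U\ind\ge 0$. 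Hence $\sup_{i:v_{\epsilon,i}\ge 0}(Uv_\epsilon)_i\le 0$, so the CMP gives $\sup_i(Uv_\epsilon)_i\le 0$, that is $Ux\le Uy+\epsilon\,U\ind$. Letting $\epsilon\downarrow 0$ produces $Ux\le Uy$, which is the DP.

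The only delicate point is the boundary behaviour in the DP direction: on coordinates with $x_i=0$ the hypothesis $(Ux)_i\le(Uy)_i$ is unavailable, and such coordinates can lie in the ``test set'' $\{i:v_i\ge 0\}$ of the raw CMP applied to $x-y$, which blocks a direct argument. The perturbation by $\epsilon\ind$ is exactly what removes this obstruction, since it pushes the test set strictly inside $\{i:x_i>0\}$, where the hypothesis is valid; the remaining manipulations are routine, and the two formulations of the CMP are used interchangeably as asserted in the excerpt.
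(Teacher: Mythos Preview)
Your argument is correct in both directions. The implication $(ii)\Rightarrow(i)$ via the decomposition $x=x^+-x^-$ and the pair $(x^+,\,x^-+\mu)$ in the DP is clean, and for $(i)\Rightarrow(ii)$ you correctly identify that the obstruction at boundary indices $\{x_i=0\}$ forces the perturbation $v_\epsilon=x-y-\epsilon\ind$; the inclusion $\{v_{\epsilon,i}\ge 0\}\subseteq\{x_i>0\}$ then makes the supremum form of the CMP apply, and the limit $\epsilon\downarrow 0$ is harmless. The invocation of the equilibrium potential (using the positive-diagonal hypothesis) is exactly as stated in the paper.

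As for comparison: the paper does not actually prove this proposition. It is recorded in Section~3 as a complement and simply attributed to Lemma~2.13 of \cite{libroDMSM2014}, so there is no in-text argument to weigh yours against. Your proof is self-contained and would serve perfectly well in place of the citation.
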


Finally let us recall a simple algorithm to check when a nonnegative matrix that satisfies the
CMP or DP is nonsingular (see Corollary 2.46 and Corollary 2.47 in \cite{libroDMSM2014}).

\begin{proposition} Assume that $U$ is a nonnegative matrix, that satisfies either
CMP or DP, then the following are equivalent
\begin{enumerate}[(i)]
\item $U$ is nonsingular 
\item  not two columns of $~U$ are proportional.
\end{enumerate}
\end{proposition}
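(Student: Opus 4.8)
The plan is to show the two implications separately, with the interesting direction being that "no two columns proportional" forces nonsingularity, given that $U$ is nonnegative and satisfies CMP or DP. For the easy direction $(i)\To(ii)$: if two columns of $U$ were proportional, say the $j$-th column equals $\lambda$ times the $k$-th column for some $\lambda\ge 0$ (nonnegativity forces $\lambda\ge 0$), then $U(e_j-\lambda e_k)=0$, so $U$ is singular; contrapositively $(i)$ gives $(ii)$. This needs no structure beyond linearity.

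For the substantive direction $(ii)\To(i)$, I would argue by contraposition: suppose $U$ is singular and produce two proportional columns. Pick $z\neq 0$ with $Uz=0$, and split $z=z^+-z^-$, so that $Uz^+=Uz^-$ with both $z^+,z^-\ge 0$ and disjointly supported. If $U$ satisfies DP, apply the domination principle with $x=z^+$, $y=z^-$: on every coordinate $i$ in the support of $z^+$ one has $(Uz^+)_i=(Uz^-)_i$, hence $(Uz^+)_i\le(Uz^-)_i$, so DP yields $Uz^+\le Uz^-$; by symmetry (swapping the roles of $x$ and $y$) also $Uz^-\le Uz^+$, so in fact $Uz^+=Uz^-$ — which we already knew, so DP alone has to be pushed harder. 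The right move is to use DP together with the nonnegativity and positivity of the diagonal to localize: restrict attention to a minimal support. Concretely, among all nonzero vectors in the kernel choose one, $z$, whose combined support $S=\mathrm{supp}(z^+)\cup\mathrm{supp}(z^-)$ is minimal for inclusion. Then I claim the columns $\{U_{\cdot j}: j\in S\}$ are forced, by DP and minimality, to collapse onto a single proportionality class: if not, one can perturb $z$ within $\ker U$ to strictly shrink the support (e.g. by zeroing out a coordinate where the corresponding column is not "extremal" in the cone generated by the others), contradicting minimality. Since a kernel vector has $|S|\ge 2$, we end with at least two proportional columns.

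For the CMP case I would reduce to the DP case rather than redo the argument: by the Proposition just above (Lemma 2.13 of \cite{libroDMSM2014}), a nonnegative matrix with positive diagonal satisfying CMP also satisfies DP, and CMP (or DP) with a nonsingular-looking hypothesis still forces the diagonal to be positive — indeed if $U_{ii}=0$ then nonnegativity makes the whole $i$-th row zero, so that row of $U$ is zero and columns restricted there give no obstruction; one handles this degenerate row separately, observing it makes $U$ singular and, since $n\ge 2$ in any nontrivial case, forces (or is consistent with) proportional columns only after a small additional argument. Modulo that bookkeeping, CMP $\To$ DP lets the CMP statement inherit the conclusion.

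The main obstacle I anticipate is the minimal-support / "column collapse" step in the DP direction: turning the domination inequality $Uz^+\le Uz^-$ and $Uz^-\le Uz^+$ into the combinatorial statement that the columns indexed by $S$ are pairwise proportional. The delicate point is that DP is a conditional inequality (it only activates on the support of $x$), so one must choose the test vectors $x,y$ carefully and exploit minimality of $S$ to rule out the case where the columns span more than a line. I expect this is exactly where \cite{libroDMSM2014} (Corollaries 2.46–2.47) does the real work, and I would lean on the structure theory there — in particular the description of $U$ restricted to a DP-irreducible block — to close the gap cleanly rather than reconstructing it from scratch.
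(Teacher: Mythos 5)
The paper itself does not prove this proposition; it simply cites Corollaries 2.46 and 2.47 of \cite{libroDMSM2014}, so there is no internal argument to compare against. Your direction $(i)\To(ii)$ is fine (a matrix with two proportional columns is singular, contrapositive). The problem is the substantive direction $(ii)\To(i)$, where your sketch does not close. The pivot of your argument is the claim that a kernel vector $z$ of minimal support forces the columns indexed by $\mathrm{supp}(z)$ into a single proportionality class, and your proposed mechanism is to ``perturb $z$ within $\ker U$ to strictly shrink the support.'' This fails in the generic singular case $\dim\ker U=1$: there is nothing to perturb with, since every kernel vector is a scalar multiple of $z$ and has the same support. Moreover, as you yourself observe, applying DP to $x=z^+$, $y=z^-$ only returns the equality $Uz^+=Uz^-$ you started from, so no new information is extracted from the domination principle anywhere in the sketch. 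The step from ``$U$ singular and satisfies DP'' to ``two columns proportional'' is precisely the content of the cited corollaries, and you explicitly defer it to the reference rather than prove it; as a self-contained proof the proposal therefore has a genuine gap at its central step.

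Two smaller points. First, in the CMP case you invoke ``$U_{ii}=0$ implies the whole $i$-th row is zero'' to justify reducing to positive diagonal; that implication is not obvious from CMP and nonnegativity alone and would itself need proof (and a zero \emph{row} does not by itself produce two proportional \emph{columns}, so the ``bookkeeping'' you wave at is not trivial). Second, even granting positive diagonal, the reduction CMP $\To$ DP via the preceding proposition is legitimate, but it only transfers the burden to the DP case, which is the part that is missing. To actually close the argument you would need the structure theory behind Corollaries 2.46--2.47 (e.g.\ the representation of DP matrices via substochastic kernels and the analysis of equivalent states), none of which appears in the sketch.
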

There is a lack of symmetry in this result from columns and rows, because CMP is not stable
under transposition. On the other hand DP is stable under transposition, so in this case
$U$ is nonsingular iff no two rows are proportional.

\section*{Acknowledgement}  
Authors S.M. and J.SM. where partially founded by CONICYT, project BASAL AFB170001. The authors 
thank an anonymous referee for comments and suggestions.



\begin{thebibliography}{99}
\bibitem{Brandts2016} Brandts, J., Cihangir, A., Geometric aspects of the symmetric inverse M-matrix problem. 
 Linear Algebra Appl. 506, 33--81, (2016). 

\bibitem{Carmona} Carmona, A., Encinas, A.M., Mitjana, M., On the M-matrix inverse problem 
for singular and symmetric Jacobi matrices. 
Linear Algebra Appl. 436(5), 1090--1098, (2012). 

\bibitem{choquet1956} Choquet, G., Deny, J., Mod\`eles finis en th\'eorie du potentiel.
 J. d'Analyse Math\'ematique 5, 77--135, (1956).

\bibitem{dell1996} Dellacherie, C., Mart\'{\i}nez, S., San Mart\'{\i}n,  J.,  
 Ultrametric matrices and induced Markov chains.  Adv. App. Math.  
17, 169--183, (1996).

\bibitem{dell2009} Dellacherie, C., Mart\'{\i}nez, S., San Mart\'{\i}n,  J.,
 Hadamard functions of inverse M-Matrices. 
SIAM J. Matrix Anal. Appl. 31(2), 289--315, (2009).
 
\bibitem{dell2011} Dellacherie, C., Mart\'{\i}nez, S., San Mart\'{\i}n,  J.,
 Hadamard functions that preserve inverse $M$-matrices. 
SIAM J. Matrix Anal. Appl. 33(2), 501--522, (2012).

\bibitem{libroDMSM2014}  Dellacherie, C., Mart\'{\i}nez, S., San Mart\'{\i}n,  J.,
 Inverse $M$-matrices and Ultrametric matrices. 
Lecture Notes in Mathematics 2118, Springer (2014).

\bibitem{Fiedler} Fiedler, M., Relations between the diagonal elements of an M-matrix and the inverse matrix. 
(Russian) Mat.-Fyz. Casopis Sloven. Akad. Vied 12, 123--128, (1962).

\bibitem{Fiedler1998} Fiedler, M.,  Some characterizations of symmetric inverse M -matrices. 
Proceedings of the Sixth Conference of the International Linear Algebra Society (Chemnitz, 1996).
 Linear Algebra Appl. 275/276, 179--187, (1998).

\bibitem{Fiedler2000} Fiedler, M.,  Special ultrametric matrices and graphs. 
SIAM J. Matrix Anal. Appl. 22, 106--113, (2000).

\bibitem{Fiedler1983} Fiedler, M., Schneider, H.,  Analytic functions of M-matrices and generalizations. 
Linear Multilinear Algebra 13, 185--201, (1983).

\bibitem{Hogben} Hogben, L., The symmetric M-matrix and symmetric inverse M-matrix completion problems. 
Linear Algebra Appl. 353, 159--168, (2002).
 
\bibitem{Horn1985} Horn, R., Johnson, C.R.,  Matrix Analysis. Cambridge University Press, Cambridge (1985).

\bibitem{Horn1991} Horn, R., Johnson, C.R.,  Topics in Matrix Analysis. Cambridge University Press, Cambridge,
(1991).

\bibitem{Johnson} Johnson, C.R., Smith, R.,
The completion problem for M-matrices and inverse M-matrices. (English summary) 
Proceedings of the Fourth Conference of the International Linear Algebra Society (Rotterdam, 1994). 
Linear Algebra Appl. 241/243, 655--667, (1996). 

\bibitem{Johnson2005} Johnson, C.R.; Olesky, D. D., Rectangular submatrices of inverse M-matrices and 
 the decomposition of a positive matrix as a sum. Linear Algebra Appl. 409, 87--99, (2005). 

\bibitem{Koltracht} Koltracht, I., Neumann, M., On the inverse M-matrix problem for real symmetric positive-definite 
Toeplitz matrices. SIAM J. Matrix Anal. Appl. 12, no. 2, 310--320, (1991). 

\bibitem{Lewin} Lewin, M., Neumann, M., On the inverse M-matrix problem for (0,1)-matrices. 
Linear Algebra Appl. 30, 41--50, (1980).

\bibitem{Martinez1994} Mart\'inez , S., Michon, G., San Mart\'in, J., 
Inverses of ultrametric matrices are of Stieltjes types.
SIAM J. Matrix Anal. Appl. 15, 98--106, (1994).

\bibitem{McDonald1995} McDonald, J.J., Neumann, M., Schneider, H., Tsatsomeros, M.J., 
Inverse M-matrix inequalities  and generalized ultrametric matrices. Proceedings of the Workshop 
``Nonnegative Matrices, Applications and Generalizations'' and the 
Eighth Haifa Matrix Theory Conference (Haifa, 1993). Linear Algebra Appl. 220, 321--341, (1995). 

\bibitem{McDonald1998} McDonald, J.J., Neumann, M., Schneider, H., Tsatsomeros, M.J., Inverse tridiagonal
Z-matrices. Linear Multilinear Algebra 45, 75--97, (1998).

\bibitem{MarcusRosen2006} Marcus, M., Rosen, J.,  Markov Processes, 
Gaussian Processes and Local times, Cambridge University Press (2006).

\bibitem{Nabben2001} Nabben, R., On Green's matrices of trees. SIAM J. Matrix Anal. Appl. 22(4), 1014--1026, (2001).

\bibitem{Nabben1994} Nabben, R.,  Varga, R.S., A linear algebra proof that the inverse of a strictly 
ultrametric matrix is a strictly diagonally dominant Stieltjes matrix. SIAM J. Matrix Anal. Appl. 15, 107--113, (1994).

\bibitem{Nabben1995} Nabben, R.,  Varga, R.S., Generalized ultrametric matrices -- a class of inverse M-matrices.
Linear Algebra Appl. 220, 365--390, (1995).

\bibitem{Nabben1998} Nabben, R.,  Varga, R.S., On classes of inverse Z-matrices. 
Special issue honoring Miroslav Fiedler and Vlastimil Pt\'ak.
Linear Algebra Appl. 223/224, 521--552, (1998).

\bibitem{Neumann} Neumann, M.; Sze, N., On the inverse mean first passage matrix 
problem and the inverse M-matrix problem. Linear Algebra Appl. 434(7), 1620--1630, (2011). 

\bibitem{Willoughby} Willoughby, R. A., The inverse M-matrix problem. 
Linear Algebra and Appl. 18(1), 75--94, (1977). 
\end{thebibliography}
\end{document}